\newtheorem{theorem}{Theorem}[section]
\newtheorem{lemma}[theorem]{Lemma}
\newcommand{\Z}{\mathbb{Z}}
\DeclareMathOperator{\Diff}{Diff}
\title{Graphs derived from perfect difference sets}
\author[1]{Grahame Erskine}
\author[2]{Peter Fratri\v{c}}
\author[1,2]{Jozef \v{S}ir\'a\v{n}}
\affil[1]{Open University, Milton Keynes, UK}
\affil[2]{Slovak University of Technology, Bratislava, Slovakia}
\date{}
\begin{document}

\maketitle
\let\thefootnote\relax\footnote{Mathematics subject classification: 05C25,05C35}
\let\thefootnote\relax\footnote{Keywords: degree-diameter problem, difference graphs}

\begin{abstract}
\noindent We study a family of graphs with diameter two and asymptotically optimal order for their maximum degree, obtained from perfect difference sets. We show that for all known examples of perfect difference sets, the graph we obtain is isomorphic to one of the Brown graphs, a well-known family of graphs in the degree-diameter problem.
\end{abstract}

\section{Introduction}\label{sec:intro}
The degree-diameter problem seeks to find the largest possible graph of diameter $k$ and maximum degree $\Delta$. In the case of diameter 2, a simple counting argument yields an upper bound of $\Delta^2+1$ for the number of vertices in a graph. Graphs attaining this bound (the \emph{Moore bound}) are necessarily regular and are known to be exceedingly rare; the only examples being the cycle $C_5$ with degree 2, the Petersen graph of degree 3 and the Hoffman-Singleton graph of degree 7. By a classical result in algebraic graph theory~\cite{Hoffman1960}, the only other possible graph would have degree 57, and this existence or otherwise of such a graph is a famous open problem. For much more on Moore graphs and the degree-diameter problem, the reader is referred to the survey~\cite{miller2005moore}.

Given the scarcity of Moore graphs, it is natural to consider instead families of graphs which are in some sense close to the Moore bound. In Section~\ref{sec:diffgraphs}, we describe a family of graphs of diameter two which asymptotically approach the Moore bound for certain values of the maximum degree $\Delta$. For many values of $\Delta$, the current largest known graphs of diameter two in the literature are the \emph{Brown graphs} (also known as \emph{polarity graphs}). In Section~\ref{sec:browngraphs} we describe the Brown graphs, and show that in fact our graphs are in all known cases isomorphic to one of the Brown graphs, even though their construction is quite different.

Our graphs are based on perfect difference sets, and before describing their construction we give some background on these interesting combinatorial objects. A \emph{perfect difference set} $S$ is a set of residues modulo $n$ (for some positive integer $n$) with the property that every non-zero residue modulo $n$ can be uniquely expressed as the difference of two elements of $S$. If $|S|=k$, it is immediate by counting pairs of elements of $S$ that $n=k^2-k+1$. If $S$ is a perfect difference set modulo $n$, then it is clear that $S+m$ (for any integer $m$) and $rS$ (for any positive integer $r$ with $\mathrm{gcd}(n,r)=1$) are also perfect difference sets. We call two perfect difference sets which are related in this way \emph{equivalent}.

In 1938, Singer~\cite{Singer1938} showed that a sufficient condition for a perfect difference set $S$ modulo $n$ to exist is that $n=q^2+q+1$ for some prime power $q$. The set $S$ then has size $q+1$. Singer's construction based on finite fields is crucial to form the link between our difference graphs and the Brown graphs in the degree-diameter problem, and we review the construction in Section~\ref{sec:diffgraphs}.

To date, no perfect difference set with $|S|$ not equal to one more than a prime power is known to exist. Such a set would lead immediately to a projective plane of non prime power order, the existence of which is one of the most famous open problems in combinatorics.

\section{Difference graphs}\label{sec:diffgraphs}
Let $S$ be a perfect difference set modulo $n$. We define the \emph{difference graph} $\Diff(\Z_n,S)$ as follows. The vertex set of $\Diff(\Z_n,S)$ is the set of residues modulo $n$, which we identify with the additive cyclic group $\Z_n$. For any $x,y\in\Z_n$, there is an edge from $x$ to $y$ in the graph if and only if $x+y\in S$. (We suppress the loops in the graph for any $x$ with $x+x\in S$.)

It is apparent from the definition that $\Diff(\Z_n,S)$ has order $n=q^2+q+1$, where $q+1=|S|$. A vertex $x$ has degree $q+1$, unless $x+x\in S$ in which case it has degree $q$. Thus $\Diff(\Z_n,S)$ has $q+1$ vertices of degree $q$ and $q^2$ vertices of degree $q+1$. If $x$ and $y$ are distinct vertices, then we may write $x-y=s-t$ for some $s,t\in S$. Then the vertex $s-x=t-y$ is adjacent to both $x$ and $y$. Thus $\Diff(\Z_n,S)$ has diameter 2, and since it has maximum degree $\Delta=q+1$ and order $q^2+q+1$, its order asymptotically approaches the Moore bound for large $q$.

The following lemma is easily proved.

\begin{lemma}\label{lem:iso}
Let $S$ and $T$ be equivalent perfect difference sets for the cyclic group $\Z_n$. Then the difference graphs $\Diff(\Z_n,S)$ and $\Diff(\Z_n,T)$ are isomorphic.
\end{lemma}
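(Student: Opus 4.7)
The plan is to construct an explicit isomorphism using the same affine form that defines equivalence. Since $T$ is equivalent to $S$, we may write $T = rS + m$ for some $m \in \Z_n$ and some $r$ with $\gcd(r,n)=1$. I would try the map $\phi : \Z_n \to \Z_n$ given by $\phi(x) = rx + c$, where $c$ is an as yet undetermined constant, and ask what $c$ must be for $\phi$ to carry $S$-edges to $T$-edges. Computing $\phi(x) + \phi(y) = r(x+y) + 2c$, the edge condition transports as
\[
  x + y \in S \iff r(x+y) + 2c \in T = rS + m \iff r(x+y) \in rS + (m-2c),
\]
which holds for all pairs precisely when $m - 2c \equiv 0 \pmod n$, since multiplication by $r$ is a bijection on $\Z_n$.

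So the natural choice is $c = m/2$ modulo $n$, and the key sub-step is to verify that $2$ is invertible in $\Z_n$. This follows from the earlier observation that $n = k^2 - k + 1 = k(k-1) + 1$ with $k = |S|$; since $k(k-1)$ is a product of consecutive integers, it is even, hence $n$ is odd and $2^{-1}$ exists modulo $n$. With $c = 2^{-1} m$ in hand, $\phi$ is a bijection on $\Z_n$ (affine with invertible leading coefficient), and the equivalence above becomes $x+y \in S \iff \phi(x) + \phi(y) \in T$, so $\phi$ is a graph isomorphism from $\Diff(\Z_n,S)$ to $\Diff(\Z_n,T)$.

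If I preferred to avoid dividing by $2$, I would instead prove the lemma in two steps: first show that $\phi_1(x) = rx$ is an isomorphism $\Diff(\Z_n,S) \to \Diff(\Z_n,rS)$ (immediate, since $\phi_1(x)+\phi_1(y) = r(x+y)$), and then show that some affine map handles the translation $rS \mapsto rS + m$. The translation step is the only place where the parity of $n$ really enters, and I would note that it cannot be handled by a pure translation $x \mapsto x + t$ (which would require $2t = m$), so either way one is forced to observe that $n$ is odd. I do not foresee any genuine obstacle beyond this small arithmetic point; the rest is a direct unpacking of the definition of the edge relation $x + y \in S$.
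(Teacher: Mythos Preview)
Your argument is correct; the paper itself omits the proof, saying only that the lemma ``is easily proved,'' so there is no approach to compare against. Your explicit isomorphism $\phi(x)=rx+2^{-1}m$, together with the observation that $n=k(k-1)+1$ is always odd so that $2^{-1}$ exists in $\Z_n$, is precisely the kind of detail one would supply here.
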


Figure~\ref{fig:diff4} shows an example of a difference graph of order 21, derived from the perfect difference set $S=\{0,1,4,14,16\}$ in $\Z_{21}$. We can see that the five vertices $0,2,7,8,11$ have degree 4, and the remainder have degree 5. In this case vertex 14 is adjacent to all the vertices of minimum degree, although this is not typical.
\begin{figure}[h]\centering
	\begin{tikzpicture}[x=0.2mm,y=-0.2mm,inner sep=0.1mm,scale=1,thick,vertex/.style={circle,draw,minimum size=12,fill=white}]
	\scriptsize
	\node at (460,200) [vertex] (v1) {$0$};
	\node at (600,500) [vertex] (v2) {$1$};
	\node at (220,200) [vertex] (v3) {$2$};
	\node at (600,360) [vertex] (v4) {$3$};
	\node at (160,360) [vertex] (v5) {$4$};
	\node at (260,380) [vertex] (v6) {$5$};
	\node at (320,580) [vertex] (v7) {$6$};
	\node at (380,200) [vertex] (v8) {$7$};
	\node at (540,200) [vertex] (v9) {$8$};
	\node at (420,320) [vertex] (v10) {$9$};
	\node at (160,500) [vertex] (v11) {$10$};
	\node at (300,200) [vertex] (v12) {$11$};
	\node at (80,280) [vertex] (v13) {$12$};
	\node at (680,280) [vertex] (v14) {$13$};
	\node at (380,120) [vertex] (v15) {$14$};
	\node at (380,660) [vertex] (v16) {$15$};
	\node at (500,380) [vertex] (v17) {$16$};
	\node at (540,500) [vertex] (v18) {$17$};
	\node at (340,320) [vertex] (v19) {$18$};
	\node at (220,500) [vertex] (v20) {$19$};
	\node at (440,580) [vertex] (v21) {$20$};
	\path
	(v1) edge (v2)
	(v1) edge (v5)
	(v1) edge (v15)
	(v1) edge (v17)
	(v2) edge (v4)
	(v2) edge (v14)
	(v2) edge (v16)
	(v2) edge (v21)
	(v3) edge (v13)
	(v3) edge (v15)
	(v3) edge (v20)
	(v3) edge (v21)
	(v4) edge (v12)
	(v4) edge (v14)
	(v4) edge (v19)
	(v4) edge (v20)
	(v5) edge (v11)
	(v5) edge (v13)
	(v5) edge (v18)
	(v5) edge (v19)
	(v6) edge (v10)
	(v6) edge (v12)
	(v6) edge (v17)
	(v6) edge (v18)
	(v6) edge (v21)
	(v7) edge (v9)
	(v7) edge (v11)
	(v7) edge (v16)
	(v7) edge (v17)
	(v7) edge (v20)
	(v8) edge (v10)
	(v8) edge (v15)
	(v8) edge (v16)
	(v8) edge (v19)
	(v9) edge (v14)
	(v9) edge (v15)
	(v9) edge (v18)
	(v10) edge (v13)
	(v10) edge (v14)
	(v10) edge (v17)
	(v11) edge (v12)
	(v11) edge (v13)
	(v11) edge (v16)
	(v12) edge (v15)
	(v13) edge (v14)
	(v16) edge (v21)
	(v17) edge (v20)
	(v18) edge (v19)
	(v18) edge (v21)
	(v19) edge (v20)
	;
	\end{tikzpicture}
	\caption{The difference graph $\Diff(\Z_{21},\{0,1,4,14,16\})$}
	\label{fig:diff4}
\end{figure}
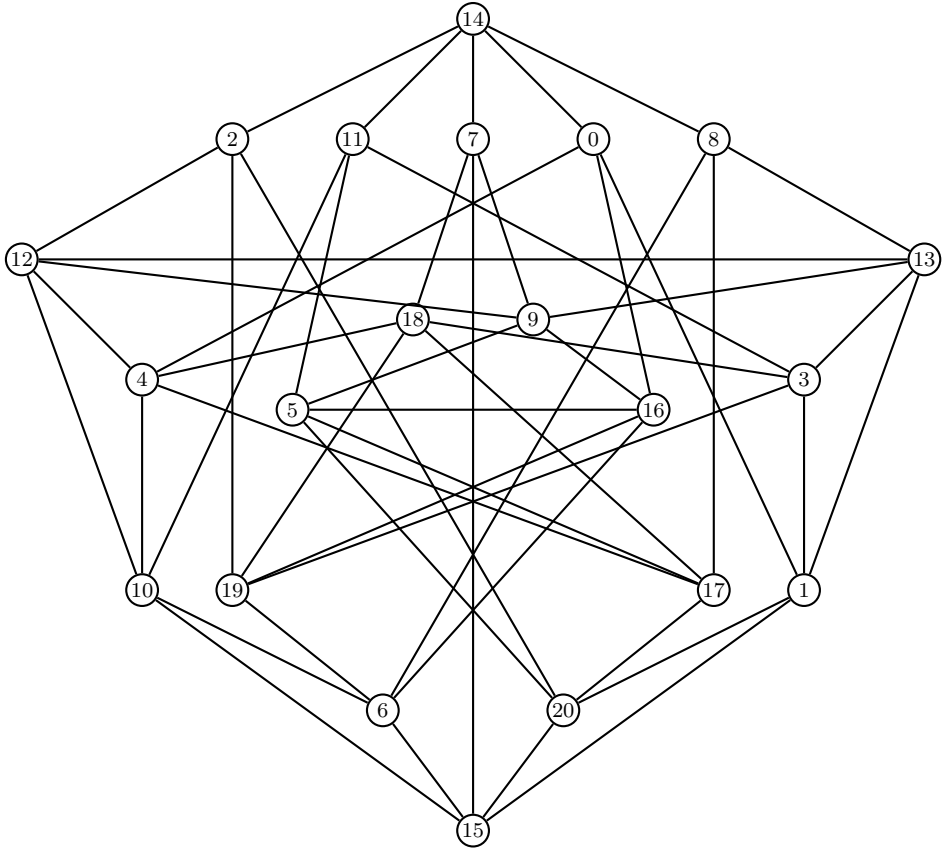

We now recast the definition of our difference graphs in terms of Singer's construction~\cite{Singer1938} of perfect difference sets, as amplified by Halberstam and Laxton~\cite{Halberstam1963} and others. Let $q$ be a prime power, and let $K=GF(q)$ be the unique finite field with $q$ elements. Let $F=GF(q^3)$, so that $K$ is a subfield of $F$. The multiplicative groups $K^*$ and $F^*$ are cyclic, of orders $q-1$ and $q^3-1$ respectively, and so the quotient group $G=F^*/K^*$ is cyclic of order $q^2+q+1$.

We let $\xi$ be a primitive element of $F$. By~\cite{Singer1938,Halberstam1963} the set $S=\{\xi K^*\}\cup \{(1+t\xi)K^*;\ t\in K\}$ of $q+1$ cosets of $K^*$ is a perfect difference set for the cyclic group $G$. We therefore define the graph $\Diff(G,S)$ to have vertex set $G$, with vertices $\xi^iK^*$ and $\xi^jK^*$ adjacent if and only if $\xi^{i+j}K^*\in S$.

Different choices of $\xi$ will in general give different perfect difference sets $S$ by this construction. However, it is proved in~\cite{Halberstam1963} that all such perfect difference sets for a given prime power $q$ are equivalent. By Lemma~\ref{lem:iso} therefore, all difference graphs obtained in this way are isomorphic, and we may denote them by $\Diff(q)$ for a given $q$.

\section{Relationship to Brown graphs}\label{sec:browngraphs}
In~\cite{Brown1966}, Brown introduced a family of graphs of diameter 2 which asymptotically approach the Moore bound for certain values of the maximum degree $\Delta$. (These graphs had previously been studied by Erd\H{o}s, R\'enyi and S\'os in a different context~\cite{Erdos1962}.) Given a prime power $q$, we define the graph $B(q)$ as follows. The vertex set of $B(q)$ is the set of points in the projective space $PG(2,q)$; equivalently, we identify a vertex with a vector $\overline{x}=(x_0,x_1,x_2)$ in $(GF(q))^3$, with not all coordinates zero, considering vectors to be the same if one is a constant multiple of the other. Two vertices in $B(q)$ represented by vectors $\overline{x}$ and $\overline{y}$ are adjacent if and only if $\overline{x}\cdot\overline{y}=0$; that is, $x_0 y_0+x_1 y_1+x_2 y_2=0$.

The properties of these graphs were studied in detail in~\cite{Bachraty2014} and we list their most relevant parameters here.
\begin{itemize}
	\item $B(q)$ has order $q^2+q+1$ and diameter 2.
	\item $B(q)$ has $q+1$ vertices of order $q$ and $q^2$ vertices of order $q+1$.
	\item For odd $q\geq 7$, the graph $B(q)$ is the largest known graph of diameter 2 and maximum degree $q+1$~\cite{miller2005moore}.
	\item For even $q$, it was shown in~\cite{Erdoes1980} that a small improvement can be made by adding a new vertex to $B(q)$ and joining it to all $q+1$ vertices of degree $q$, resulting in a $(q+1)$-regular graph of diameter 2 and order $q^2+q+2$.
\end{itemize}

The correspondence between the properties of $B(q)$ and our difference graph $\Diff(q)$ is striking. It is natural to ask whether these are in fact isomorphic. For small $q$, an explicit isomorphism can be readily determined. For example, an isomorphism between the graph $\Diff(4)$ illustrated in Figure~\ref{fig:diff4} and the Brown graph $B(4)$ is shown in Table~\ref{tab:iso}. In the table, the elements of $GF(4)$ are taken to be $0$, $1$, $\zeta$ and $\zeta^2=\zeta+1$, where $\zeta$ is a primitive element of $GF(4)^*$.

\begin{table}[h]\centering
\begin{tabular}{rlcrlcrl}
0 & $( 1, 0, 1 )$ &~~~~~& 1 & $( 1, \zeta^2, 1 )$ &~~~~~& 2 & $( 1, 1, 0 )$ \\
3 & $( 1, \zeta^2, \zeta^2 )$ &~~~~~& 4 & $( 0, 1, 0 )$ &~~~~~& 5 & $( 1, \zeta, \zeta )$ \\
6 & $( 0, 1, \zeta )$ &~~~~~& 7 & $( 1, \zeta, \zeta^2 )$ &~~~~~& 8 & $( 1, \zeta^2, \zeta )$ \\
9 & $( 1, \zeta^2, 0 )$ &~~~~~& 10 & $( 1, 0, 0 )$ &~~~~~& 11 & $( 0, 1, 1 )$ \\
12 & $( 0, 0, 1 )$ &~~~~~& 13 & $( 1, \zeta, 0 )$ &~~~~~& 14 & $( 1, 1, 1 )$ \\
15 & $( 0, 1, \zeta^2 )$ &~~~~~& 16 & $( 1, \zeta, 1 )$ &~~~~~& 17 & $( 1, 0, \zeta^2 )$ \\
18 & $( 1, 0, \zeta )$ &~~~~~& 19 & $( 1, 1, \zeta^2 )$ &~~~~~& 20 & $( 1, 1, \zeta )$ \\
\end{tabular}
\caption{An isomorphism between graphs $\mathrm{Diff}(4)$ and $B(4)$}
\label{tab:iso}
\end{table}

In the remainder of this section, we prove our main result which is that $B(q)$ and $\Diff(q)$ are isomorphic for all $q$. Throughout, we let $F=GF(q^3)$ for a prime power $q$ and let $K=GF(q)$ be the (unique) subfield of $F$ of order $q$; we let $F^*$ and $K^*$ denote the corresponding multiplicative groups. We let $\xi$ be a primitive element of $F$, and it turns out that the algebra is much simplified if the minimal polynomial of $\xi$ over $K$ has a zero quadratic term. We therefore need the following lemma.

\begin{lemma}\label{lem:minpoly}
If $q$ is a prime power other than 4, then $F=GF(q^3)$ has a primitive element with a minimal polynomial over $K=GF(q)$ of the form $x^3-(\alpha x + \beta)$ for some non-zero $\alpha,\beta\in K$.
\end{lemma}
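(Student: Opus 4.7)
The plan is to reduce the lemma to a trace condition and then invoke a Hansen--Mullen-type existence result on primitive polynomials with prescribed coefficients. First I would observe that any primitive $\xi \in F$ lies in no proper subfield (the unique such being $K$, whose exponent $q-1$ is strictly less than $q^3-1$), so $[K(\xi):K]=3$ and
\[
m_\xi(x) = (x-\xi)(x-\xi^q)(x-\xi^{q^2}) = x^3 - \operatorname{Tr}(\xi)\,x^2 + e_2(\xi)\,x - \operatorname{N}(\xi),
\]
with $\operatorname{Tr}$, $\operatorname{N}$ and $e_2(\xi)=\xi^{1+q}+\xi^{1+q^2}+\xi^{q+q^2}$ all taken over $K$. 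Hence $m_\xi$ has the required form $x^3-(\alpha x+\beta)$ exactly when $\operatorname{Tr}_{F/K}(\xi)=0$; the constant $\beta=\operatorname{N}_{F/K}(\xi)$ is then automatically non-zero, and the coefficient $\alpha=-e_2(\xi)$ is forced to be non-zero as well, since $\alpha=0$ would give $\xi^3\in K^*$ and hence $\xi^{3(q-1)}=1$, forcing the order of $\xi$ to divide $3(q-1)$; combined with primitivity this would yield $q^2+q+1\mid 3$, impossible for $q\geq 2$.

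The lemma therefore reduces to showing that for every prime power $q\neq 4$ the field $F$ contains a primitive element of trace zero, which is the degree-$3$ case of the Hansen--Mullen problem with prescribed second coefficient. I would estimate
\[
N(q)=\#\{\xi\in F^*:\xi\text{ primitive and }\operatorname{Tr}_{F/K}(\xi)=0\}
\]
by combining the additive-character orthogonality $[\operatorname{Tr}(\xi)=0]=q^{-1}\sum_{c\in K}\psi(c\operatorname{Tr}(\xi))$ with the M\"obius expansion of the primitivity indicator over the multiplicative characters of $F^*$ whose orders divide $q^3-1$. After interchanging sums, $N(q)$ splits into a main term $\phi(q^3-1)/q$ plus mixed Gauss sums, each of modulus at most $q^{3/2}$ by Weil's theorem, giving an estimate of the shape
\[
\bigl|N(q)-\phi(q^3-1)/q\bigr|\leq 2^{\omega(q^3-1)}\,q^{3/2},
\]
which forces $N(q)>0$ for all sufficiently large $q$ and reduces the remaining work to a finite computation.

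The hard part will be this small-$q$ bookkeeping, because the factor $2^{\omega(q^3-1)}$ can swamp the main term whenever $q^3-1$ has many small prime divisors. For each small $q$ not already handled I would either sharpen the Weil estimate using the explicit factorisation of $q^3-1$ or simply exhibit a primitive trinomial of the desired shape directly (for example $x^3+x+1$ when $q=2$ and $x^3-x+1$ when $q=3$). The only genuine exception that survives this analysis is $q=4$: an exhaustive check of the twelve primitive polynomials of degree three over $GF(4)$ shows that each of them has non-zero $x^2$-coefficient, which is precisely why the statement must exclude this one value of $q$.
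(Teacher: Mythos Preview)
Your reduction is exactly the one the paper uses: the shape $x^3-(\alpha x+\beta)$ is equivalent to $\operatorname{Tr}_{F/K}(\xi)=0$, the non-vanishing of $\beta$ follows from irreducibility, and your order argument for $\alpha\neq 0$ is identical to the paper's (``a cube root of an element in $K$ has order at most $3(q-1)$'').  The only difference is in how the existence of a trace-zero primitive element is obtained.  The paper simply invokes Cohen--Pre\v{s}ern, \emph{Primitive finite field elements with prescribed trace} (2005), which settles precisely this case of the Hansen--Mullen problem, including the exceptional status of $q=4$.  You instead sketch the underlying proof of that result: the Weil-type Gauss-sum estimate for $N(q)$ followed by a finite check of small $q$.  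Your route is more self-contained and shows where the exception $q=4$ actually comes from, but it leaves real work undone (pinning down the threshold where the character bound takes over and carrying out the residual computation), whereas the paper's citation disposes of the lemma in one line.
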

\begin{proof}
By~\cite{Cohen2005}, there is a primitive cubic polynomial in $p(x)\in K[x]$ of the required form provided $q\neq 4$. Clearly $\beta\neq 0$ since $p$ is irreducible; and $\alpha\neq 0$ since a cube root of an element in $K$ must have multiplicative order at most $3(q-1)$ and so cannot be primitive in $F$.
\end{proof}

The idea of the proof of isomorphism is to identify the vertex sets in $\Diff(q)$ and $B(q)$ in a natural way using Singer's finite field construction of the perfect difference set from Section~\ref{sec:diffgraphs}. In $\Diff(q)$, the vertices are the elements of $G=F^*/K^*$ and in $B(q)$, the vertices are vectors of the form $\overline{x}=(x_0,x_1,x_2)$ with scalar multiples considered the same vector. By choosing a basis for $F$ as a 3-dimensional vector space over $K$, we immediately have a bijection between the two vertex sets. If we can find a $K$-basis for $F$ such that this bijection becomes a graph isomorphism, then we are done.

Before the proof of the main result we need one small lemma, which is a standard result.

\begin{lemma}\label{lem:twosquares}
Let $q$ be a prime power and let $b$ be any element of $GF(q)$. Then there exist $c,d\in GF(q)$ such that $c^2+d^2=b$.
\end{lemma}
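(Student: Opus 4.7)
The plan is to split into cases according to the characteristic of $GF(q)$.

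In characteristic $2$, the Frobenius map $x \mapsto x^2$ is an automorphism of $GF(q)$ (since it is an injective endomorphism of a finite field), hence a bijection. In particular, every element of $GF(q)$ is a square, so one can simply write $b = c^2$ for some $c$ and take $d = 0$.

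In odd characteristic, I would use a standard pigeonhole/counting argument. Consider the two subsets
\[
A = \{\, c^2 : c \in GF(q)\,\}, \qquad B = \{\, b - d^2 : d \in GF(q)\,\}
\]
of $GF(q)$. Since $x^2 = y^2$ in $GF(q)$ iff $y = \pm x$, the map $c \mapsto c^2$ is two-to-one on nonzero elements and sends $0$ to $0$, so $|A| = (q-1)/2 + 1 = (q+1)/2$; the same holds for $B$, which is just an affine translate of $-A$. Then $|A| + |B| = q + 1 > q = |GF(q)|$, so $A$ and $B$ cannot be disjoint. Any element in $A \cap B$ gives $c^2 = b - d^2$ for some $c,d \in GF(q)$, as required.

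I do not expect any serious obstacle: the characteristic $2$ case is one line via Frobenius, and the odd case is the textbook pigeonhole argument on quadratic residues, which only requires that $(q+1)/2 + (q+1)/2 > q$.
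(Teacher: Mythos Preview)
Your argument is correct and is exactly the standard proof of this fact. The paper itself does not supply a proof of Lemma~\ref{lem:twosquares}; it simply labels it ``a standard result'' and states it without justification, so there is no approach in the paper to compare against. Your characteristic-$2$ case via the Frobenius bijection and the odd-characteristic pigeonhole count on the sets $\{c^2\}$ and $\{b-d^2\}$, each of size $(q+1)/2$, are precisely the textbook arguments the authors presumably had in mind.
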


We are now ready to prove the main result of this section.

\begin{theorem}\label{thm:iso}
Let $q$ be any prime power. Then the graphs $\Diff(q)$ and $B(q)$ are isomorphic.
\end{theorem}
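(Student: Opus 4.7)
The plan is to realise both $\Diff(q)$ and $B(q)$ as polarity graphs of $PG(2,q)$ coming from non-degenerate symmetric $K$-bilinear forms on $K^3$, and then to show these forms agree up to a linear change of coordinates and a nonzero scalar. I will use the primitive element $\xi\in F$ supplied by Lemma~\ref{lem:minpoly}, so its minimal polynomial over $K$ is $x^3-\alpha x-\beta$ with $\alpha,\beta\in K^*$; the excluded case $q=4$ is already covered by the explicit isomorphism in Table~\ref{tab:iso}.

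The basis $\{1,\xi,\xi^2\}$ of $F$ over $K$ gives a natural bijection $\phi:F^*/K^*\to PG(2,q)$ sending a coset to the class of its coordinate vector. Inspection of the Singer set $S=\{\xi K^*\}\cup\{(1+t\xi)K^*:t\in K\}$ shows that $\phi(S)$ is exactly the projective line $\{x_2=0\}$; equivalently, $\gamma K^*\in S$ iff $L(\gamma)=0$, where $L$ is the $K$-linear functional on $F$ extracting the $\xi^2$-coefficient. Hence $\gamma K^*$ and $\delta K^*$ are adjacent in $\Diff(q)$ precisely when $L(\gamma\delta)=0$. Expanding $(x_0+x_1\xi+x_2\xi^2)(y_0+y_1\xi+y_2\xi^2)$ and using $\xi^3=\alpha\xi+\beta$ and $\xi^4=\alpha\xi^2+\beta\xi$ identifies $L(\gamma\delta)$ as the symmetric bilinear form with matrix
\[
M=\begin{pmatrix} 0 & 0 & 1\\ 0 & 1 & 0\\ 1 & 0 & \alpha\end{pmatrix},
\]
whereas $B(q)$-adjacency is encoded by the identity matrix $I$.

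It therefore suffices to produce $P\in GL_3(K)$ and $\lambda\in K^*$ with $P^TP=\lambda M$, for then $\Phi:=P\circ\phi$ satisfies $\Phi(\gamma K^*)\cdot\Phi(\delta K^*)=\lambda\,L(\gamma\delta)$ and converts $\Diff(q)$-adjacency into $B(q)$-adjacency. The matrix $M$ already has the first standard basis vector as an isotropic vector and splits orthogonally as $H\perp\langle 1\rangle$, where $H$ is the hyperbolic plane of discriminant $-1$. For $q$ odd I will show $-I$ admits the same splitting: Lemma~\ref{lem:twosquares} furnishes $c,d\in K$ with $c^2+d^2=-1$, making $(c,d,1)$ an isotropic vector for $-I$; completing to a hyperbolic pair and comparing discriminants mod squares yields $-I\cong H\perp\langle 1\rangle\cong M$, so a $P$ with $P^TP=-M$ exists. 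For $q$ even I will exhibit $P$ directly; for instance, taking
\[
P=\begin{pmatrix}1 & 1+\sqrt\alpha & 0\\ 1 & 1+\sqrt\alpha & 1\\ 0 & 1 & 1+\sqrt\alpha\end{pmatrix},\qquad \lambda=1,
\]
where $\sqrt\alpha\in K$ is the unique Frobenius square root, the six identities $P^TP=M$ reduce to elementary manipulations in characteristic two, and $\det P=1$.

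The principal difficulty is this equivalence-of-forms step. In odd characteristic it hinges on Lemma~\ref{lem:twosquares} to produce the isotropic vector for $-I$ together with the classification of symmetric bilinear forms over $K$ via discriminants mod squares; in even characteristic that classification breaks down and a separate direct construction is required. Once $P$ has been exhibited, the isomorphism $\Diff(q)\cong B(q)$ follows formally from the dictionary above.
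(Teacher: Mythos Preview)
Your proposal is correct and follows essentially the same route as the paper: both identify $\Diff(q)$-adjacency with vanishing of the bilinear form given by the matrix $M$ (via the $\xi^2$-coefficient functional on $F$ in the basis $\{1,\xi,\xi^2\}$), and then show $M$ is projectively congruent to $I$, handling $q=4$ separately. The only cosmetic differences are that you seek $P$ with $P^TP=\lambda M$ whereas the paper seeks $A$ with $A^TMA=\gamma I$, and for odd $q$ you argue congruence via Witt decomposition and discriminant comparison while the paper cites Hirschfeld and writes down an explicit $A$ (both using the same $c^2+d^2=-1$ from Lemma~\ref{lem:twosquares}); your explicit even-$q$ matrix is different from the paper's but equally valid.
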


\begin{proof}
If $q=4$, then an explicit isomorphism is given in Table~\ref{tab:iso}. So suppose $q\neq 4$. By Lemma~\ref{lem:minpoly} there is a primitive element $\xi$ of $F$ such that $\xi^3=\alpha\xi+\beta$ for non-zero $\alpha,\beta\in K$.

We will use the Singer difference set $S$ on $G=F^*/K^*$ given by the set of $q+1$ cosets of the form $S=\{\xi K^*\}\cup \{(1+t\xi)K^*;\ t\in K\}$. To simplify the notation, for any pair of elements $r,s\in F^*$ we will write $r\sim s$ if and only if $rK^*=sK^*$.

In the difference graph $\Diff(G,S)$, two distinct vertices $\xi^iK^*$ and $\xi^jK^*$ are adjacent if and only if $\xi^iK^* \cdot \xi^jK^*\in S$, which translates into $\xi^{i+j}\sim \xi$ or $\xi^{i+j}\sim 1+t\xi$ for some $t\in K$. Writing down $\xi^i$ and $\xi^j$ in terms of the basis $\{1,\xi,\xi^2\}$ of $F$ over $K$, one has
$\xi^i=x_0 + x_1\xi + x_2\xi^2$ and $\xi^j=y_0+y_1\xi + y_2\xi^2$ for some $x_i,y_i\in K$, $i\in\{0,1,2\}$. Now, using $\xi^3=\alpha\xi+\beta$ and $\xi^4=\alpha\xi^2+\beta\xi$, the product $\xi^i\xi^j$ evaluates to
\[ \xi^{i+j} = \gamma + \delta\xi + (x_0y_2 + x_1y_1 + x_2y_0 + \alpha x_2y_2)\xi^2 \]
where $\gamma =x_0y_0 + (x_1y_2+x_2y_1)\beta$ and $\delta =x_0y_1+x_1y_0+(x_1y_2+x_2y_1)\alpha +x_2y_2\beta$. It is now clear that the adjacency condition $\xi^{i+j}\sim \xi$ or $\xi^{i+j}\sim 1+t\xi$ for some $t\in K$ is satisfied if and only if the value of the symmetric bilinear form \[ {\mathcal B}(\overline{x},\overline{y}) = x_0y_2 + x_1y_1 + x_2y_0 + \alpha x_2y_2 \] is equal to zero for the vectors $\overline{x}=(x_0,x_1,x_2)$ and  $\overline{y}=(y_0,y_1,y_2)$ representing the elements $\xi^i$ and $\xi^j$; note that $K^*$-multiples of $\overline{x}$ and $\overline{y}$ represent the elements $\xi^iK^*$ and $\xi^jK^*$ of $F^*/K^*$. This gives an isomorphism of our difference graph $\Diff(G,S)$ onto a Brown-like graph $P(K^3,{\mathcal B})$ whose vertices are projective non-zero triples in $K^3$, with two vertices $\overline{x}K^*$ and $\overline{y}K^*$ adjacent if and only if ${\mathcal B}(\overline{x},\overline{y}) =0$.

To complete the proof, we must show that the above bilinear form $\mathcal{B}$ is projectively equivalent to the standard dot product $\mathcal{A}(\overline{x},\overline{y})=x_0 y_0+x_1 y_1+x_2 y_2$; that is to say, there is a basis change matrix $A$ which takes one to the other, up to a scalar multiple. We let $\mathcal{B}$ be represented by the symmetric matrix
\[ B = \begin{pmatrix} 0 & 0 & 1 \\ 0 & 1 & 0 \\ 1 & 0 & \alpha \end{pmatrix}  \]
so that $\mathcal{B}(\overline{x},\overline{y})=\overline{x}B\overline{y}^T$. Similarly, $\mathcal{A}$ is represented by the $3\times 3$ identity matrix $I$. So we seek a matrix $A$ such that $A^T BA=\gamma I$, for some non-zero $\gamma$.

By~\cite[Theorem 5.8]{Hirschfeld1998} for odd $q$ the bilinear form $\mathcal{B}$ is indeed projectively equivalent to ${\mathcal A}$. In~\cite{Hirschfeld1998} a method is given to explicitly construct a basis change matrix $A$. Recalling that by Lemma~\ref{lem:twosquares} there exist $c,d\in K$ with $c^2+d^2=-1$, for odd $q$ it can be checked that the following matrix $A$ satisfies $A^T BA=-I$:
\[ A = \begin{pmatrix} d-c\alpha/2  & -(c+d\alpha/2) & -(1+\alpha/2) \\ c-d & c+d & 1 \\ c & d & 1 \end{pmatrix}  \]
If $q$ is a power of $2$, then the non-zero element $\alpha\in K$ has a unique square root $\sqrt{\alpha}\in K$, and then one can take
\[ A = \begin{pmatrix} \sqrt{\alpha}  & 0 & 0 \\ 0 & 1 & 0 \\ \sqrt{\alpha^{-1}} & 0 & \sqrt{\alpha^{-1}} \end{pmatrix}  \]
In either case, the basis $(1,\xi,\xi^2)A$ is a $K$-basis for $F$ demonstrating the isomorphism between $\Diff(q)$ and $B(q)$.
\end{proof}

\section{A variation on the construction}\label{sec:variation}
As it stands, Brown's construction (and hence also our difference graph construction) may be used only to construct graphs of diameter 2 and maximum degree $\Delta=q+1$ for some prime power $q$. In~\cite{Siran2011large}, the authors address this issue by modifying the Brown graphs to have larger maximum degree, by adding edges to the basic Brown graph. In this way they are able to construct asymptotically good graphs of diameter 2 for any given value of maximum degree $\Delta$, having order equal to the order of the Brown graph corresponding to the largest prime power $q$ such that $q+1\leq\Delta$. In this section we expand the ideas of our difference graph construction in Section~\ref{sec:diffgraphs}, and show that for certain values of the maximum degree we can use an alternative to the construction in~\cite{Siran2011large}.

We begin with some necessary definitions. Let $G$ be a group and let $N$ be some proper normal subgroup of $G$. Suppose $N$ has order $n$ and $G$ has order $mn$. An $(m,n,k,\lambda)$ \emph{relative difference set} $R$ is a set of $k$ elements of $G$ with the property that every element of $G\setminus N$ occurs exactly $\lambda$ times as a difference of distinct elements $r_1,r_2\in R$, and no non-identity element of $N$ occurs at all. Our construction will use the relative difference set in the following lemma, which is easily proved.

\begin{lemma}\label{lem:rds}
Let $p$ be an odd prime, let $F$ be the field $GF(p)$ and denote the additive and multiplicative groups of $F$ by $F^+$ and $F^*$ respectively. Let $G=F^+\times F^+$ and let $N$ be the subgroup of $G$ defined by $N=\{(0,a):a\in F^+\}$. Then $R=\{(a,a^2):a\in F^+\}$ is a $(p,p,p,1)$ relative difference set for $G$ relative to $N$.
\end{lemma}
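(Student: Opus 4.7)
The plan is to verify the three defining properties of a relative difference set in turn: that $|R|=k=p$, that no non-identity element of the subgroup $N$ appears as a difference of two distinct elements of $R$, and that every element of $G\setminus N$ appears exactly once (so $\lambda=1$) as such a difference.

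First I would note that the map $a\mapsto(a,a^2)$ from $F^+$ to $R$ is injective (the first coordinate recovers $a$), giving $|R|=p$. The central computation is then the difference formula for distinct $a,b\in F$:
\[ (a,a^2)-(b,b^2)=\bigl(a-b,\,(a-b)(a+b)\bigr), \]
from which both remaining conditions follow quickly. If this difference lay in $N$, the first coordinate $a-b$ would be zero, contradicting $a\neq b$; so no non-identity element of $N$ arises as a difference.

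For the count, I would fix any target $(c,d)\in G\setminus N$, so $c\neq 0$, and solve for $a,b$. The difference formula reduces to the linear system $a-b=c$ and $a+b=dc^{-1}$, which has the unique solution $a=2^{-1}(c+dc^{-1})$, $b=2^{-1}(dc^{-1}-c)$. Since these $a,b$ are distinct (their difference is $c\neq 0$), exactly one ordered pair of distinct elements of $R$ produces the target difference, giving $\lambda=1$.

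The only subtle point, and the reason the hypothesis restricts to odd $p$, is that the solution requires $2$ to be invertible in $F$. In characteristic $2$ one has $a^2-b^2=(a-b)^2$ and the second coordinate is determined by the first, so the construction collapses. Aside from this observation the argument is a direct verification and presents no real obstacle; a sanity check via parameter counting ($p(p-1)$ ordered pairs of distinct elements of $R$ matching $|G\setminus N|=p(p-1)$) confirms the answer.
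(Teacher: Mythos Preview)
Your verification is correct and complete; the paper itself omits the proof entirely, calling the lemma ``easily proved,'' and your direct computation of the differences $(a,a^2)-(b,b^2)=(a-b,(a-b)(a+b))$ followed by solving the resulting linear system is exactly the natural argument one would supply.
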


The idea of our modified construction is to use the relative difference set $R$ from Lemma~\ref{lem:rds} to define most of the adjacencies in a graph of order $q^2$, then add further edges based on the ideas of Section~\ref{sec:diffgraphs} so that the resulting graph has diameter 2.

Let $p$ be an odd prime, let $F=GF(p)$, $G=F^+\times F^+$ and let $R$ be the relative difference set in Lemma~\ref{lem:rds}. Let $\Gamma_0$ be the graph with vertex set $G$ and edges defined as follows.
\[(a,b)\sim(c,d)\iff (ab)+(c,d)\in R\]
(As usual we suppress any loops in the above definition.)

From the definition of $R$, it is immediate that two arbitrary vertices $(a,b)$ and $(c,d)$ in $\Gamma_0$ are at distance at most 2 provided $a\neq c$. We now use the construction of Section~\ref{sec:diffgraphs} to handle adjacencies between vertices where $a=c$. To do this, we require that our prime $p$ must be of the form $p=q^2+q+1$ for some prime power $q$. This motivates the final definition of our graph as follows.

Let $q$ be a prime power such that $p=q^2+q+1$ is a prime. Let $R$ be the relative difference set in Lemma~\ref{lem:rds}; let $S$ be a perfect difference set for $\Z_p$ and let $F=GF(p)$, $G=F^+\times F^+$. Let $\Gamma$ be the graph with vertex set $G$ and edges defined as follows.
\[(a,b)\sim(c,d)\iff (a,b)\neq(c,d)\text{ and }
\begin{cases}
(ab)+(c,d)\in R\\
\quad\text{or}\\
a=c\text{ and }b+d\in S
\end{cases}\]
It is easy to see that $\Gamma$ has maximum degree $\Delta=p+q+1$, order $p^2$ and diameter 2.

The graphs produced by this construction are asymptotically optimal, in the sense that the order approaches 
$\Delta^2$ as $\Delta\to\infty$. For those values of $\Delta$ for which our construction applies, we should 
note that the method  in~\cite{Siran2011large} of simply adding edges to a Brown graph will in general yield a slightly larger number of vertices; however, the construction here is new as far as we are aware.
\bigskip

\noindent{\bf Acknowledgment}~ The third author acknowledges support from the APVV Research Grants 15-0220 and 17-0428, and the VEGA Research Grants 1/0142/17 and 1/0238/19.


\end{document}